\theoremstyle{plain}
\newtheorem{theorem}{Theorem}[section]
\newtheorem{lemma}[theorem]{Lemma}
\newtheorem{proposition}[theorem]{Proposition}
\theoremstyle{definition}
\newtheorem{definition}[theorem]{Definition}
\theoremstyle{remark}
\newtheorem{remark}[theorem]{Remark}
\numberwithin{equation}{section}
\newcommand{\dx}{\mathrm{d} x}
\newcommand{\dy}{\mathrm{d} y}
\newcommand{\dt}{\mathrm{d} t}
\newcommand{\io}{\int_\Omega}
\newcommand{\rd}{\mathrm{d}}
\newcommand{\eps}{\varepsilon}
\newcommand{\N}{\mathbb{N}}
\newcommand{\R}{\mathbb{R}}
\newcommand{\br}{\bar{\rho}}
\newcommand{\bn}{\bar{n}}
\newcommand{\bu}{\bar{u}}
\newcommand{\bv}{\bar{v}}
\title[]{High-friction limit for bipolar Euler--Riesz systems}
\author[]{Nuno J. Alves}
\author[]{Jan Haskovec}
\address[N. J. Alves]{
      University of Vienna, Faculty of Mathematics, Oskar-Morgenstern-Platz 1, 1090 Vienna, Austria.}
\email{nuno.januario.alves@univie.ac.at}
\address[J. Haskovec]{
        King Abdullah University of Science and Technology, CEMSE Division, Thuwal 23955-6900, Kingdom of Saudi Arabia.}
\email{jan.haskovec@kaust.edu.sa}
\begin{document}

\begin{abstract}
We consider a bipolar Euler--Riesz system and rigorously justify the high-friction limit of weak solutions towards a bipolar aggregation-diffusion system with Riesz interactions. The analysis is carried out via the relative entropy method in the regime where smooth solutions of the limiting equations exist. This extends previous results on the high-friction limit of bipolar Euler--Poisson systems to a more general class of interactions, and extends the one-species Euler--Riesz case to the bipolar setting.
\end{abstract}

\keywords{Bipolar Euler--Riesz equations, high-friction limit, bipolar aggregation-diffusion, relative entropy method}
\subjclass[2020]{35Q31, 35Q35} 
\maketitle
\thispagestyle{empty} 

\section{Introduction}
Let $d \in \N$ denote the spatial dimension, and let $0 < \alpha < d$. We consider the following bipolar Euler--Riesz system
\begin{equation} \label{BER}
\begin{dcases}
\partial_t \rho + \nabla \cdot (\rho u) = 0, \\ 
\partial_t(\rho u) + \nabla \cdot (\rho u \otimes u) + \nabla p_1(\rho) + \sigma \rho \nabla K_\alpha \ast (\rho - n) = - \zeta \rho u, \\
\partial_t n + \nabla \cdot (n v) = 0, \\
\partial_t(n v) + \nabla \cdot (n v \otimes v) + \nabla p_2(n) - \sigma n \nabla K_\alpha \ast (\rho - n) = - \zeta n v,
\end{dcases}
\end{equation}
posed in $(0,T) \times \Omega$, where $0 < T < \infty$ and $\Omega \subseteq \R^d$ is a bounded domain with smooth boundary $\partial \Omega$.  
System \eqref{BER} consists of two continuity equations for the densities $\rho$ and $n$
of positively and negatively charged ions, respectively, and two momentum equations for their corresponding velocities $u$ and $v$.
We prescribe the no-flux boundary conditions
\begin{equation} \label{boundary_cond}
u \cdot \nu = v \cdot \nu = 0 \qquad \text{on } [0,T) \times \partial \Omega,
\end{equation}
where $\nu$ denotes the outward unit normal to the boundary $\partial\Omega$.
The equations are coupled through interaction forces modeled by the fractional Riesz kernel
\begin{equation} \label{kernel}
K_\alpha(x) = \tfrac{1}{d-\alpha} |x|^{\alpha - d},
\end{equation}
with interaction strength $\sigma > 0$. The pressures are assumed to follow the power laws
\begin{equation} \label{pressures}
p_1(\rho) = \rho^{\gamma_1}, \quad p_2(n) = n^{\gamma_2}, \quad \gamma = \min\{\gamma_1, \gamma_2 \} > 1,
\end{equation}
with adiabatic exponents $\gamma_1$, $\gamma_2$.  

The bipolar Euler--Riesz system \eqref{BER} describes the dynamics of a compressible fluid composed of two interacting species, generalizing the well-known bipolar Euler--Poisson system to a broader class of interactions. The latter serves as a basic model in plasma physics and semiconductor theory \cite{chen1984introduction, markowich2012semi}. Indeed, for $d \geq 3$ and $\alpha = 2$, the kernel $K_2$ corresponds (up to a multiplicative constant) to the Coulomb kernel, whose convolution with $\rho-n$ in $\R^d$ yields the solution of the Poisson equation $-\Delta \phi = \rho - n$. The counterpart of the Coulomb kernel in bounded domains is the Neumann function, which is controlled in the same manner as $K_2$;
see, e.g., \cite{kenig1994harmonic}. Another natural extension is to include magnetic effects by coupling each species with Maxwell's equations, leading to the bipolar Euler--Maxwell system; see \cite{guo2016global, alves2024relative}. We may therefore interpret $\rho$, $u$ as the density and velocity of positively charged ions, and $n$, $v$ as the corresponding quantities for negatively charged ions. The constant $\zeta > 0$ denotes the collision frequency between particles.  

The purpose of this work is to establish the high-friction limit $\zeta \to\infty$ of weak solutions of a diffusively scaled version of \eqref{BER},
see system~\eqref{BER_HF} below,
towards strong solutions of the bipolar aggregation-diffusion system with Riesz interactions:
\begin{equation} \label{BAD}
\begin{dcases}
\partial_t \rho = \nabla \cdot \big( \nabla p_1(\rho) + \sigma \rho \nabla K_\alpha \ast (\rho - n) \big), \\
\partial_t n = \nabla \cdot \big( \nabla p_2(n) - \sigma n \nabla K_\alpha \ast (\rho - n) \big).
\end{dcases}
\end{equation}
\par 
Our approach relies on the relative entropy method, a powerful framework for studying stability and asymptotic limits in hyperbolic systems. The key idea is to derive a relative entropy inequality that compares a weak solution of the original system with a strong solution of the limit system. The terms on the right-hand side of this inequality are estimated in terms of the relative entropy itself, allowing the use of Gronwall's lemma to obtain stability, from which convergence follows. We establish our result for adiabatic exponents satisfying
\begin{equation*} 
\gamma \geq 2 \quad \text{and} \quad 1 < \alpha < \frac{d}{2} + 1,
\end{equation*}
or
\begin{equation*} 
2 - \frac{\alpha - 1}{d} \leq \gamma < 2 \quad \text{and} \quad 1 < \alpha \leq \frac{d}{2} + 1,
\end{equation*}
which should be contrasted with the less restrictive assumptions $\gamma \geq 2 - \alpha/d$, $1 < \alpha < d$ obtained in the one-species case \cite{alves2024weak}. In particular, our analysis indicates that in two-species models the admissible range of adiabatic exponents is more restrictive, since the techniques that apply in the one-species case break down in the presence of asymmetric coupling between the densities.
\par This extends previous results on the high-friction limit of one-species Euler--Riesz systems~\cite{alves2024weak} and two-species Euler--Poisson systems~\cite{alves2022relaxation}.
The relative entropy method has also been applied successfully to other singular limits such as the zero-electron-mass and quasi-neutral limits in bipolar Euler--Poisson systems \cite{alves2024zero}. For broader perspectives on the relative entropy method we refer to the foundational work \cite{dafermos1979second} and to subsequent studies \cite{lattanzio2013relative, mielke2015uniform, lattanzio2017gas, giesselmann2017relative, haskovec2018decay, huo2019high, carrillo2020relative, georgiadis2023asymptotic, alves2024role, georgiadis2024alignment, elbar2025nonlocal}.  

To the best of our knowledge, this is the first appearance of both system \eqref{BER} and its formal limit \eqref{BAD} in the literature, whether considered separately or connected via the high-friction limit. The one-species counterpart of \eqref{BER} has attracted considerable attention in recent years; see \cite{choi2021relaxation, choi2022well, choi2024damped, choi2025global} for results on well-posedness and relaxation. As for the gradient flow system~\eqref{BAD}, its one-species variant was recently studied in \cite{huang2024nonlinear}, and a bipolar version with Coulomb interactions in \cite{kinderlehrer2017wasserstein}.  

We emphasize that, while our main result, Theorem~\ref{thm_main}, is rigorous in its derivation, it remains formal in scope, as we do not address the existence of solutions. For the Euler--Riesz system~\eqref{BER}, we consider weak solutions whose regularity is consistent with the natural energy estimates; see Sections~\ref{section_energy} and \ref{section_solutions}. For the limit system~\eqref{BAD}, we hypothesize that existence of solutions could be established by adapting the techniques of \cite{kinderlehrer2017wasserstein, huang2024nonlinear} to the bipolar setting with Riesz interactions.  

We organize the manuscript as follows. Section~\ref{section_energy} presents the energy and relative energy identities for system~\eqref{BER}. Section~\ref{section_solutions} introduces the notion of weak solutions used in our analysis. The main theorem is stated in Section~\ref{section_main} and proved in Section~\ref{section_proof}.

\section{Energy and relative energy} \label{section_energy}
In this section, we present the energy and relative energy functionals associated with system~\eqref{BER}; the latter being the basis of the method we use to establish the high-friction limit. \par 
Let $(\rho,u,n,v)$ be a smooth solution of \eqref{BER} satisfying the no-flux boundary conditions~\eqref{boundary_cond}. Taking the inner product of the first and second momentum equations by $u$ and $v$, respectively, and integrating over $\Omega$ we obtain
\[\frac{\rd}{\dt} \io \tfrac{1}{2} \rho |u|^2 + h_1(\rho) \, \dx + \sigma \io (\partial_t \rho) K_\alpha \ast (\rho - n) \, \dx = -\zeta \io \rho |u|^2 \, \dx,\]
and 
\[\frac{\rd}{\dt} \io  \tfrac{1}{2} n |v|^2 + h_2(n) \, \dx - \sigma \io (\partial_t n) K_\alpha \ast (\rho - n) \, \dx= -\zeta \io n |v|^2 \, \dx,\]
where $h_1,h_2$ are the internal energy functions related to the pressures through
\[\rho h_1^{\prime \prime}(\rho) = p_1^\prime(\rho), \quad  n h_2^{\prime \prime}(n) = p_1^\prime(n)\]
and therefore given by 
\begin{equation} \label{internal_energy}
h_1(\rho) = \tfrac{1}{\gamma_1 - 1}\rho^{\gamma_1}, \quad h_2(n) = \tfrac{1}{\gamma_2 - 1}n^{\gamma_2}.
\end{equation}
Now, we use the symmetry of the kernel $K_\alpha$ to deduce that
\[\io (\partial_t \rho) K_\alpha \ast (\rho - n) \, \dx - \io (\partial_t n) K_\alpha \ast (\rho - n) \, \dx = \frac{\rd}{\dt} \io \tfrac{1}{2} (\rho - n) K_\alpha \ast (\rho - n) \, \dx \]
which yields, by adding the two expressions above, the energy identity for \eqref{BER},
\begin{equation} \label{energy_identity0}
\begin{split}
\frac{\rd}{\dt} \io \tfrac{1}{2} &\rho |u|^2 + \tfrac{1}{2} n |v|^2  +h_1(\rho)+ h_2(n) + \sigma \tfrac{1}{2} (\rho - n) K_\alpha \ast (\rho - n) \, \dx  \\
& = - \zeta \io \rho |u|^2  +n |v|^2 \, \dx.
\end{split}
\end{equation}
\par 
The interaction energy term $\int_\Omega \sigma \tfrac{1}{2} (\rho - n) K_\alpha \ast (\rho - n) \, \dx$ represents the electrostatic energy associated with the charge density $\rho - n$, generalized to the Riesz kernel $K_\alpha$. For $d \geq 3$ and $\alpha = 2$, this reduces to the classical Coulomb energy $\io \tfrac{1}{2} (\rho - n) \phi \, \dx$ with $- \Delta \phi = \rho -n$.
\par
Identity \eqref{energy_identity0} represents the conservation ($\zeta = 0$) or dissipation ($\zeta > 0$) of total energy for solutions to \eqref{BER}, indicating as well their a priori regularity.  In the one-species Euler--Riesz setting, these a priori estimates for the density can be further improved. This has been achieved, for instance, via compensated integrability in \cite{agt}, and in the context of spherically symmetric solutions in \cite{cccy}.  \par 
We now consider the kinetic and potential energy functionals, denoted by $\mathcal{K}$ and $\mathcal{E}$, respectively, and given by
\begin{equation} \label{kinetic_energy_funct}
\mathcal{K}(\rho,\rho u, n ,n v) = \io \tfrac{1}{2} \rho |u|^2 + \tfrac{1}{2} n |v|^2  \, \dx,
\end{equation}
\begin{equation} \label{potential_energy_funct}
\mathcal{E}(\rho,n) = \io h_1(\rho)+ h_2(n) + \sigma \tfrac{1}{2} (\rho - n) K_\alpha \ast (\rho - n) \, \dx.
\end{equation}
Note that the kinetic energy functional is seen as a functional of the momenta $\rho u$ and $nv$.

We now present the relative kinetic and relative potential energy functionals. Given another smooth solution $(\br,\bu,\bn,\bv)$ of \eqref{BER}, the relative kinetic (resp. potential) energy functional is obtained as the quadratic part of the Taylor expansion of the kinetic (resp. potential) energy functional around $(\br,\bu,\bn,\bv)$. In particular,
\begin{align*}
\mathcal{K}(\rho,\rho u, n ,n v \, | \, \br, \br \bu,\bn, \bn \bv ) = & \  \mathcal{K}(\rho,\rho u, n ,n v ) - \mathcal{K}(\br, \br \bu,\bn, \bn \bv ) \\
& - \io \frac{\delta \mathcal{K}}{\delta \rho}(\br, \br \bu,\bn, \bn \bv) (\rho - \br) \, \dx \\
& - \io \frac{\delta \mathcal{K}}{\delta n}(\br, \br \bu,\bn, \bn \bv) (n - \bn) \, \dx  \\
& - \io \frac{\delta \mathcal{K}}{\delta (\rho u)}(\br, \br \bu,\bn, \bn \bv) \cdot (\rho u - \br \bu) \, \dx \\
& - \io \frac{\delta \mathcal{K}}{\delta (nv)}(\br, \br \bu,\bn, \bn \bv) \cdot (nv - \bn \bv) \, \dx
\end{align*}
where $\delta \mathcal{K} / \delta \rho$, $\delta \mathcal{K} / \delta n $, $\delta \mathcal{K} / \delta (\rho u)$, $\delta \mathcal{K} / \delta (nv)$ are the functional derivatives of $\mathcal{K}$. A straightforward calculation gives 
\begin{equation} \label{relative_kinetic}
\mathcal{K}(\rho,\rho u, n ,n v \, | \, \br, \br \bu,\bn, \bn \bv ) = \io \tfrac{1}{2} \rho |u - \bu|^2 + \tfrac{1}{2} n |v - \bv|^2 \, \dx.
\end{equation}
Similarly, we have the relative potential energy
\begin{equation}\label{relative_potential}
\begin{split}
\mathcal{E}(\rho, n \, | \, \br,\bn ) & =   \mathcal{E}(\rho, n ) - \mathcal{E}(\br,\bn ) - \io \frac{\delta \mathcal{E}}{\delta \rho}(\br,\bn) (\rho - \br) \, \dx  - \io \frac{\delta \mathcal{E}}{\delta n}(\br, \bn) (n - \bn) \, \dx  \\ 
& = \io h_1(\rho | \br ) + h_2(n|\bn) + \sigma \tfrac{1}{2}(\rho - \br - n + \bn) K_\alpha \ast (\rho - \br - n + \bn) \, \dx,
\end{split}
\end{equation}
where for a function $h=h(r)$ the relative quantity $h(r | \bar r)$ is given by
its Taylor expansion to second order,
 \[h(r | \bar r) = h(r) - h(\bar r) - h^\prime(\bar r)(r - \bar r).\]
\par 
Adding the relative kinetic and relative potential energy functionals results in the relative total energy. Taking its time derivative yields the relative energy identity, which can be computed using the abstract formalism presented in \cite{giesselmann2017relative} and adapting it to the bipolar case as in \cite{alves2022relaxation} (see also \cite{alves2024weak, alves2024zero}). The relative energy identity for the present case reads
\begin{equation} \label{relative_energy_identity0}
\begin{split}
\frac{\rd}{\dt} \Big(\mathcal{K}(\rho,\rho u, n ,n v \, &| \, \br, \br \bu,\bn, \bn \bv ) +  \mathcal{E}(\rho, n \, | \, \br,\bn )  \Big) +  \zeta \io \rho |u - \bu|^2  + n |n - \bn|^2 \, \dx \\
 = &  - \io \nabla \bu : \rho (u - \bu) \otimes (u - \bu) + \nabla \bv : \rho (v - \bv) \otimes (v - \bv) \, \dx \\ 
    & - \io (\nabla \cdot \bu)p_1(\rho| \br) + (\nabla \cdot \bv)p_2(n| \bn) \, \dx \\
    & + \sigma \io \big((\rho - \br)\bu - (n-\bn)\bv \big) \cdot \nabla K_\alpha \ast(\rho - \br - n + \bn)  \, \dx.
\end{split}
\end{equation}
\par
The calculations that led to identity~\eqref{relative_energy_identity0} are under the formal assumptions that the involved functions are smooth and satisfy equations~\eqref{BER} in a classical way. The purpose of this work is to derive an analogous expression for solutions where the regularity is only given by the energy identity, and rigorously justify the asymptotics $\zeta \to \infty$ for that class of solutions. The solutions of the limiting system are however assumed to have enough regularity so that the right-hand-side of \eqref{BER} can be controlled in terms of the relative total energy.

\section{Weak solutions to the bipolar Euler--Riesz system} \label{section_solutions}
This section presents the notion of weak solutions to a scaled version of system~\eqref{BER} for which we establish the high-friction limit result. We begin by observing that in limit $\zeta \to \infty$ of \eqref{BER}, one obtains $u = v = 0$ and hence the system reduces to an equilibrium state. To capture a nonequilibrium limiting phenomenon we consider the diffusive scaling
\[\tilde{\rho}(t,x) = \rho(t/\sqrt{\eps}, x), \quad \tilde{n}(t,x) = n(t/\sqrt{\eps}, x) \]
and 
\[\tilde{u}(t,x) = (1/\sqrt{\eps}) u(t/\sqrt{\eps}, x), \quad \tilde{v}(t,x) = (1/\sqrt{\eps}) v(t/\sqrt{\eps}, x) \]
where $\eps = 1/\zeta^2$. Rewriting system~\eqref{BER} in terms of the rescaled quantities, dropping the tilde notation, leads to 
\begin{equation} \label{BER_HF}
\begin{dcases}
\partial_t \rho + \nabla \cdot (\rho u) = 0, \\
\eps \big( \partial_t(\rho u) + \nabla \cdot (\rho u \otimes u) \big) + \nabla p_1(\rho) + \sigma \rho \nabla K_\alpha \ast (\rho - n) = - \rho u, \\
\partial_t n + \nabla \cdot (n v) = 0, \\
\eps \big(\partial_t(n v) + \nabla \cdot (n v \otimes v) \big) + \nabla p_2(n) - \sigma n \nabla K_\alpha \ast (\rho - n) = - n v.
\end{dcases}
\end{equation}
The high-friction limit then corresponds to the passage $\eps \to 0$ in system~\eqref{BER_HF}, which formally yields the bipolar aggregation-diffusion system~\eqref{BAD}. \par 
We now describe the framework of weak solutions to \eqref{BER_HF} that will be used in the sequel. For notational convenience, we introduce the (local) total energy functional $\mathcal{H}$, defined by
\begin{equation} \label{total_energy_HF}
\mathcal{H}(\rho, u, n ,v) = \eps \tfrac{1}{2}\rho|u|^2 + \eps \tfrac{1}{2}n|v|^2 + h_1(\rho) + h_2(n) + \sigma \tfrac{1}{2} (\rho - n) K_\alpha \ast (\rho - n).
\end{equation}
Moreover, system~\eqref{BER_HF} is supplemented with initial data $(\rho_0,u_0,n_0,v_0)$.
\begin{definition} \label{def_weak_sol}
 A tuple of functions $(\rho, u,n,v)$ with $\rho, n \ge 0$ and regularity
\[\rho \in C\big([0,T);  L^{\gamma_1}(\Omega) \big), \quad n \in C\big([0,T);  L^{\gamma_2}(\Omega) \big), \quad \gamma = \min\{\gamma_1,\gamma_2 \} > 1, \] 
 \[\rho u, nv \in C\big([0,T);L^1(\Omega,\R^d)\big), \quad \rho |u|^2, n|v|^2 \in  C\big([0,T);L^1(\Omega)\big), \]
is a dissipative weak solution of (\ref{BER_HF}) provided that:
\begin{enumerate}[(i)]
 \item $(\rho, u,n,v)$ satisfies (\ref{BER_HF}) in the following weak sense:
 \begin{equation} \label{weak1}
         \int_0^T \io \rho \, \partial_t \varphi \, \dx \, \dt+\int_0^T \io  \rho u \cdot \nabla \varphi \, \dx \, \dt + \io \rho_0  \varphi_{|_{t=0}}\, \dx=0,
        \end{equation} \\
 \begin{equation} \label{weak2}
        \begin{split}
            \int_0^T & \io \eps  \rho u \cdot \partial_t \tilde{\varphi}\, \dx \, \dt + \int_0^T \io  \eps  \rho u \otimes u : \nabla \tilde{\varphi} \, \dx \, \dt + \int_0^T \io p_1(\rho)  \nabla \cdot \tilde{\varphi} \, \dx \, \dt \\
       - & \int_0^T \io \sigma \rho \big( \nabla K_\alpha \ast(\rho - n) \big) \cdot \tilde{\varphi} \, \dx \, \dt + \io  \eps \rho_0 u_0 \cdot \tilde{\varphi}_{|_{t=0}}\, \dx \\
       & =   \int_0^T \io  \rho u \cdot \tilde{\varphi} \, \dx \, \dt ,
        \end{split}
        \end{equation} \\
\begin{equation} \label{weak3}
\int_0^T \io n \, \partial_t \psi \, \dx \, \dt
+ \int_0^T \io n v \cdot \nabla \psi \, \dx \, \dt
+ \io n_0  \psi _{|_{t=0}}\, \dx = 0,
\end{equation} \\
\begin{equation} \label{weak4}
        \begin{split}
            \int_0^T & \io \eps  n v \cdot \partial_t \tilde{\psi}\, \dx \, \dt + \int_0^T \io  \eps  n v \otimes v : \nabla \tilde{\psi} \, \dx \, \dt + \int_0^T \io p_2(n)  \nabla \cdot \tilde{\psi} \, \dx \, \dt \\
       + & \int_0^T \io \sigma n \big( \nabla K_\alpha \ast(n - \rho) \big) \cdot \tilde{\psi} \, \dx \, \dt + \io  \eps n_0 v_0 \cdot \tilde{\psi}_{|_{t=0}}\, \dx \\
       & =   \int_0^T \io  n v \cdot \tilde{\psi} \, \dx \, \dt ,
        \end{split}
\end{equation}
      for all Lipschitz test functions $\varphi, \psi : [0,T) \times \bar{\Omega} \to \R, \ \tilde{\varphi}, \tilde{\psi}:\mathopen{[}0,T) \times \bar{\Omega} \to \R^d$ compactly supported in time and satisfying $\tilde{\varphi} \cdot \nu = \tilde{\psi} \cdot \nu = 0 $ on $\mathopen{[}0,T) \times \partial \Omega,$ where $\nu$ is the outward unit normal to $\partial \Omega$.
    \item $(\rho,u,n,v)$ conserves mass:
    \begin{equation} \label{massconservation}
 \io \rho \, \dx = M_1, \quad \io n \, \dx = M_2,
  \end{equation}
  for some $M_1,M_2 \in\R$ and all $t \in [0,T)$.
   \item $(\rho, u,n,v)$ has finite total energy:
  \begin{equation} \label{energyconservation}
  \io \mathcal{H}(\rho,u,n,v) \, \dx \leq C_0
 \end{equation}
 for some $C_0 \in\R$ and all $t \in [0,T)$.
 \item $(\rho, u,n,v)$ satisfies the following weak form of the energy dissipation:
\begin{equation} \label{weakdissip}
   \begin{split}
- & \int_0^T   \io \mathcal{H}(\rho,u,n,v) \, \dot{\theta}\, \dx \, \dt +  \int_0^T \io \big( \rho |u|^2+ n |v|^2\big) \theta \, \dx \, \dt \\
& \leq   \io  \mathcal{H}(\rho_0,u_0,n_0,v_0) \, \theta(0) \, \dx 
   \end{split}
  \end{equation} \\
  for all nonnegative and compactly supported $\theta \in W^{1,\infty}([0,T))$. 
 \end{enumerate}
\end{definition}
 
Let us provide a few clarifying remarks on the properties of a dissipative weak solution $(\rho,u,n,v)$ as in Definition~\ref{def_weak_sol}. We begin by noting that the assumed regularity is sufficient to guarantee that all terms in the weak formulations of points (i) and (iv) are well defined, except for those involving the kernel $K_\alpha$. In particular, we need to make sense of the following terms:
 \begin{equation} \label{terms_1}
 \io \rho \big( \nabla K_\alpha \ast(\rho - n) \big) \cdot \tilde{\varphi} \, \dx, \quad \io n \big( \nabla K_\alpha \ast(\rho - n) \big) \cdot \tilde{\psi} \, \dx,
\end{equation}
 and 
 \begin{equation} \label{term_2}
 \io (\rho - n) K_\alpha \ast (\rho - n) \, \dx
 \end{equation} 
 where $\tilde{\varphi},\tilde{\psi}$ are Lipschitz continuous on $\bar{\Omega}$. Since $\Omega$ is bounded, we have $\rho, n \in C\big([0,T); L^\gamma(\Omega) \big)$. Moreover, we extend all the involved functions by zero to all of $\R^d$, so that we can write, for $x \in \Omega$,
\begin{align*}
K_\alpha \ast (\rho - n)(x) & = \tfrac{1}{d-\alpha} \io (\rho(y) - n(y))|x-y|^{\alpha - d} \, \dy \\
&  =  \tfrac{1}{d-\alpha} \int_{\R^d} (\rho(y) - n(y))|x-y|^{\alpha - d} \, \dy
\end{align*}
and similarly for $\nabla K_\alpha \ast (\rho - n)$. We first give meaning to the weak gradient of $K_\alpha \ast (\rho - n)$. This is the content of the next proposition, whose proof is a straightforward adaptation of the proofs of \cite[Proposition 3.1]{alves2024role} and \cite[Proposition 4.1]{alves2024weak}.

\begin{proposition} \label{prop_nabla_K}
Let $d > 1$ and $1 < \alpha < d$. If $f,g \in C\big([0,T); L^\gamma(\Omega) \big)$ with $\gamma \geq 2d/(d+\alpha)$, then $K_\alpha \ast f \in C\big([0,T); L^{\frac{2d}{d-\alpha}}(\Omega) \big)$ and $g (K_\alpha \ast f) \in C\big([0,T); L^{1}(\Omega) \big)$. Moreover, the weak spatial gradient of $K_\alpha \ast f$ is given by 
\[\nabla (K_\alpha \ast f)(t,x) = \nabla K_\alpha \ast f(t,x) = - \io f(t,y) (x-y) |x-y|^{\alpha - d - 2} \, \dy\]
and belongs to $C\big([0,T); L^{\frac{2d}{d-\alpha+2}}(\Omega) \big)$. Therefore,
\[|\nabla K_\alpha| \leq c K_{\alpha - 1} \]
where $c = d-\alpha -1$.
\end{proposition}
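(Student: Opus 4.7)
The central tool will be the Hardy--Littlewood--Sobolev (HLS) inequality applied to the Riesz potential. The plan is to exploit the specific exponent $\gamma \geq 2d/(d+\alpha)$, which is precisely the HLS-conjugate of $2d/(d-\alpha)$, so that the convolution $K_\alpha \ast f$ automatically lands in the desired Lebesgue space. First I would extend $f,g$ by zero to all of $\R^d$; since $\Omega$ is bounded, $L^\gamma(\Omega) \hookrightarrow L^{2d/(d+\alpha)}(\Omega)$. Then HLS with exponents $p = 2d/(d+\alpha)$, $q = 2d/(d-\alpha)$, verifying the scaling relation $\tfrac{1}{p} + \tfrac{d-\alpha}{d} = 1 + \tfrac{1}{q}$, yields
\[
\|K_\alpha \ast f(t,\cdot)\|_{L^{2d/(d-\alpha)}(\Omega)} \leq C \|f(t,\cdot)\|_{L^{2d/(d+\alpha)}(\Omega)} \leq C' \|f(t,\cdot)\|_{L^\gamma(\Omega)}.
\]
For the product $g(K_\alpha \ast f)$, Hölder's inequality with dual pair $\bigl(2d/(d-\alpha), 2d/(d+\alpha)\bigr)$ and the assumed integrability of $g$ give $L^1$-integrability.

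For the gradient statement, I would compute $\nabla K_\alpha(x) = -x|x|^{\alpha-d-2}$ and note the pointwise bound $|\nabla K_\alpha(x)| \leq c\, K_{\alpha-1}(x)$, reducing the problem to another Riesz potential with parameter $\alpha-1 \in (0,d)$ (the assumption $\alpha > 1$ is essential here). Applying HLS once more, this time matching $p = 2d/(d+\alpha)$ with target exponent $2d/(d-\alpha+2)$ via the relation $\tfrac{1}{p} + \tfrac{d-\alpha+1}{d} = 1 + \tfrac{d-\alpha+2}{2d}$, yields the claimed integrability of $\nabla K_\alpha \ast f$.

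To identify this as the distributional spatial gradient of $K_\alpha \ast f$, I would test against a smooth compactly supported function $\phi \in C^\infty_c(\Omega)$, write $\int (K_\alpha \ast f)\, \partial_i \phi\, dx$, appeal to Fubini (justified by absolute integrability coming from the HLS estimates just obtained applied to $|\phi|$ and $|f|$), and move the derivative onto the mollified kernel by integration by parts. A standard cutoff/regularization of the singularity of $K_\alpha$ at the origin, followed by a dominated convergence argument using $|\nabla K_\alpha| \leq c K_{\alpha-1} \in L^1_{\mathrm{loc}}$, gives the pointwise formula. Time continuity in each of the stated spaces is then inherited from $f \in C([0,T);L^\gamma(\Omega))$ by linearity of convolution and the HLS bound applied to increments $f(t_1,\cdot) - f(t_2,\cdot)$.

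The main obstacle I expect is the passage of the derivative inside the convolution, since $\nabla K_\alpha$ has a nonintegrable singularity of the same order as the dimension at the origin. This is where cutting off a ball of radius $\delta$ around $x$ and using the local $L^1$ bound $\int_{|z|\leq 1}|\nabla K_\alpha(z)|\,dz < \infty$ (again, using $\alpha > 1$) together with the HLS-based global control is crucial; everything else is routine once the admissible exponent pairs are correctly identified.
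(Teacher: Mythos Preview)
Your proposal is correct and follows the same Hardy--Littlewood--Sobolev / Riesz-potential-mapping route that the paper defers to via its cited references (\cite{alves2024role,alves2024weak}); the exponent bookkeeping you carry out is exactly what those proofs do. One quibble: you describe the singularity of $\nabla K_\alpha$ at the origin as ``nonintegrable of the same order as the dimension,'' but $|\nabla K_\alpha(x)| = |x|^{\alpha-d-1}$ is locally integrable precisely because $\alpha > 1$ (the order is $d+1-\alpha < d$), as you yourself invoke two lines later --- so the cutoff step you flag as the ``main obstacle'' is in fact routine dominated convergence, not a genuine difficulty.
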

 
 The previous proposition, together with the regularity of a dissipative weak solution, yields that for $\gamma \geq 2d/(d+\alpha)$, the term in \eqref{term_2} is well defined. It remains to show that the same holds true for the terms in~\eqref{terms_1}. However, the presence of the mixed term $\rho - n$ makes this task more complicated, and here we observe a fundamental difference between the one-species and two-species models. It would be desirable to make sense of the terms in \eqref{terms_1} without any further restriction on the adiabatic exponents. This is possible in the one-species model, where mixed densities such as $\rho - n$ are not present. As an illustration, note that, by the spatial antisymmetry of $\nabla K_\alpha$, we have
 \begin{equation} \label{nabla_controlled}
 \begin{split}
 \io \rho \big( \nabla K_\alpha \ast\rho \big) \cdot \tilde{\varphi} \, \dx   & = \io \io \tilde{\varphi}(x) \cdot \rho(x) \nabla K_\alpha(x-y) \rho(y) \, \dx \, \dy \\ 
 & = \frac{1}{2} \io \io (\tilde{\varphi}(x) - \tilde{\varphi}(y) ) \cdot \rho(x) \nabla K_\alpha(x-y) \rho(y) \, \dx \, \dy \\
 & \leq \frac{1}{2} \io \io |\tilde{\varphi}(x) - \tilde{\varphi}(y)| \rho(x) |\nabla K_\alpha(x-y)| \rho(y) \, \dx \, \dy \\
 & \leq \frac{1}{2} \io \io \frac{|\tilde{\varphi}(x) - \tilde{\varphi}(y)|}{|x-y|} \rho(x) |x - y|^{\alpha - d} \rho(y) \, \dx \, \dy \\
 & \leq \frac{d-\alpha}{2} \|\nabla \tilde{\varphi} \|_\infty \io \rho (K_\alpha \ast \rho) \, \dx,
 \end{split}
 \end{equation}
 where we have used the Lipschitz continuity of $\tilde{\varphi}$. Unfortunately, the same argument does not hold for the terms in~\eqref{terms_1}. We are therefore forced to restrict our study to the case $\gamma \geq 2d/(d+\alpha - 1)$, which by Proposition~\ref{prop_nabla_K}, with $\alpha$ replaced by $\alpha-1$, yields that
\[\rho \nabla K_\alpha \ast(\rho - n), \ n \nabla K_\alpha \ast(\rho - n) \in C\big([0,T); L^{1}(\Omega) \big) \] 
making the terms in~\eqref{terms_1} well-defined. \par 
 
\section{Statement of the main result} \label{section_main}
In this section we state our main result. It concerns a stability estimate for dissipative weak solutions of the (scaled) bipolar Euler--Riesz equations \eqref{BER_HF} and strong solutions of the bipolar aggregation-diffusion system~\eqref{BAD}. \par
The strategy is to consider system~\eqref{BAD} as an approximation of system~\eqref{BER_HF} so that the relative total energy of \eqref{BER_HF} can be used to compare solutions of \eqref{BER_HF} with solutions of \eqref{BAD} in the limit $\eps \to 0$. \par
Given a solution $(\br,\bn)$ of \eqref{BAD}, we let $\bu$ and $\bv$ be the auxiliary linear velocities given by
\begin{equation} \label{approx_velocities_HF}
\bu = \nabla h_1^\prime (\br) +  \sigma \nabla K_\alpha \ast (\br - \bn), \quad \bv = \nabla h_2^\prime (\bn) -  \sigma\nabla K_\alpha \ast (\br - \bn),
\end{equation} 
so that system~\eqref{BAD} can be rewritten as 
\begin{equation} \label{BAD_approx}
\begin{dcases}
\partial_t \br + \nabla \cdot (\br \bu) = 0, \\
\eps \big( \partial_t(\br \bu) + \nabla \cdot (\br \bu \otimes \bu) \big) + \nabla p_1(\br) + \sigma \br \nabla K_\alpha \ast (\br - \bn) = - \br \bu + \eps \bar{e}_1, \\
\partial_t \bn + \nabla \cdot (\bn \bv) = 0, \\
\eps \big(\partial_t(\bn \bv) + \nabla \cdot (\bn \bv \otimes \bv) \big) + \nabla p_2(\bn) - \sigma \bn \nabla K_\alpha \ast (\br - \bn) = - \bn \bv + \eps \bar{e}_2,
\end{dcases}
\end{equation}
with the approximation errors
  \[\bar{e}_1 = \partial_t(\br \bu) + \nabla \cdot (\br \bu \otimes \bu) , \quad  \bar{e}_2=\partial_t(\bn \bv) + \nabla \cdot (\bn \bv \otimes \bv). \] 
We identify a solution $(\br,\bn)$ of~\eqref{BAD} with the tuple $(\br, \bu, \bn, \bv)$ solving~\eqref{BAD_approx}, where the linear velocities $\bu,\bv$ are defined as in \eqref{approx_velocities_HF}.
 
 We assume that the strong solutions are sufficiently regular classical solutions with bounded first-order derivatives. We also require the corresponding auxiliary linear velocities $\bu,\bv$ to satisfy the no-flux boundary conditions~\eqref{boundary_cond}. Moreover, we impose the extra assumption that the densities $\br$, $\bn$ are bounded away from zero, that is, 
 \begin{equation} \label{bounded_away_zero}
 \bar{\delta} \leq \br, \bn \leq \bar{M} 
 \end{equation}
 for some $\bar{\delta} > 0$ and $\bar{M} < \infty$. \par 
A solution $(\rho,u,n,v)$ of~\eqref{BER_HF} is then compared to a solution $(\br, \bu, \bn, \bv)$ of~\eqref{BAD} through the relative energy $\Psi: [0,T) \to \mathbb{R}$ given by
\begin{equation} \label{Psi}
\Psi(t) = \io \eps \, \mathcal{K}(\rho,\rho u, n ,n v \, | \, \br, \br \bu,\bn, \bn \bv ) +  \mathcal{E}(\rho, n \, | \, \br,\bn ) \, \dx
\end{equation}
 where $\mathcal{K}$ and $\mathcal{E}$ are as in~\eqref{relative_kinetic} and \eqref{relative_potential}, respectively. We note that solutions of~\eqref{BER_HF} depend on $\eps$, and so does $\Psi$. In the sequel we omit an explicit notation of this dependence for simplicity.
 
Within this framework of solutions, we prove the following: 
\begin{theorem} \label{thm_main}
Let $1 < \alpha < d$, and let $(\rho, u, n ,v)$ be a dissipative weak solution to~\eqref{BER_HF} with $\sigma$ sufficiently small and $\gamma > 1$. If either
\begin{equation} \label{gamma_hyp_1}
\gamma \geq 2 \quad \text{and} \quad 1 < \alpha < \frac{d}{2} + 1,
\end{equation}
or
\begin{equation} \label{gamma_hyp_2}
2 - \frac{\alpha - 1}{d} \leq \gamma < 2 \quad \text{and} \quad 1 < \alpha \leq \frac{d}{2} + 1,
\end{equation}
and $(\br,\bn)$ is a strong solution of~\eqref{BAD} satisfying~\eqref{bounded_away_zero}, then the relative energy $\Psi$ of those solutions satisfies
\begin{equation} \label{main_stability}
\sup_{t \in (0,T)}\Psi(t) \leq e^{CT}(\Psi(0) + \eps^2)
\end{equation}
for some $C>0$. Consequently, if $\Psi(0) \to 0$ as $\eps \to 0$, then
\[ \sup_{t \in (0,T)}\Psi(t) \to 0 \quad \text{as} \ \eps \to 0.\]
\end{theorem}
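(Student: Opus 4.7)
I follow the relative entropy/energy method for weak--strong pairs. The plan is to derive a rigorous differential inequality for the functional $\Psi$ in \eqref{Psi} of the form $\Psi(t) \leq \Psi(0) + C\eps^2 + C\int_0^t \Psi(\tau)\,\dt$, from which \eqref{main_stability} follows by Gronwall's lemma. The rigorous counterpart of the formal identity \eqref{relative_energy_identity0} is obtained by testing the weak formulations \eqref{weak1}--\eqref{weak4} against functions built from the strong solution $(\br,\bu,\bn,\bv)$ of \eqref{BAD_approx}. Specifically, I would use $\eps\bu$ in \eqref{weak2} and $\eps\bv$ in \eqref{weak4}, together with the approximate chemical potentials $h_1'(\br) + \sigma K_\alpha \ast (\br-\bn) - \tfrac{\eps}{2}|\bu|^2$ in \eqref{weak1} and $h_2'(\bn) - \sigma K_\alpha \ast (\br-\bn) - \tfrac{\eps}{2}|\bv|^2$ in \eqref{weak3}. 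Combining these with \eqref{weakdissip} and exploiting the symmetry of $K_\alpha$ (as in the derivation of \eqref{energy_identity0}) produces the weak analogue of \eqref{relative_energy_identity0}, now carrying additional $\eps$-multiplied contributions from the approximation errors $\bar{e}_1,\bar{e}_2$. The assumptions \eqref{bounded_away_zero} and the smoothness of $(\br,\bn)$ make all these test functions admissible and all integrals finite.

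\textbf{Standard right-hand side bounds.} The convective pieces $\nabla\bu:\rho(u-\bu)\otimes(u-\bu)$ and their $v$-analogues are controlled by $(\|\nabla\bu\|_\infty+\|\nabla\bv\|_\infty)\cdot\eps\mathcal{K}(\rho,\rho u,n,nv\,|\,\br,\br\bu,\bn,\bn\bv)$, which is absorbed into the kinetic part of $\Psi$ (whose prefactor is already $\eps$). The pressure relative terms $(\nabla\cdot\bu)\,p_1(\rho|\br)$ and $(\nabla\cdot\bv)\,p_2(n|\bn)$ are dominated by $h_1(\rho|\br)$ and $h_2(n|\bn)$ by a standard convexity estimate on $[\bar\delta,\bar M]$; the dichotomy $p_i(r|\bar r)\lesssim h_i(r|\bar r)$ holds directly when $\gamma\geq 2$ and after interpolation when $1<\gamma<2$, which is already one source of the case split in \eqref{gamma_hyp_1}--\eqref{gamma_hyp_2}. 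Writing $\bar{e}_1=\br(\partial_t\bu+(\bu\cdot\nabla)\bu)$ and $\bar{e}_2=\bn(\partial_t\bv+(\bv\cdot\nabla)\bv)$ (via the continuity equation for $\br,\bn$), the error integrals $\eps\io \bar{e}_i\cdot(\cdot-\cdot)\,\dx$ are split by Young's inequality into a fraction of the dissipation $\io\rho|u-\bu|^2+n|v-\bv|^2\,\dx$ (absorbed by the left-hand side) and an $\eps^2$ term bounded uniformly by the regularity of $(\br,\bn)$ and mass conservation \eqref{massconservation}; this is the sole source of the $\eps^2$ in \eqref{main_stability}.

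\textbf{The main obstacle --- nonlocal cross term.} The genuinely delicate contribution is
\[
J := \sigma \io \big((\rho-\br)\bu-(n-\bn)\bv\big)\cdot \nabla K_\alpha\ast(\rho-\br-n+\bn)\,\dx.
\]
Expanding produces four pieces. The two diagonal ones $\io(\rho-\br)\bu\cdot\nabla K_\alpha\ast(\rho-\br)\,\dx$ and its $(n,\bv)$ counterpart are handled by the antisymmetrization trick used in \eqref{nabla_controlled}, yielding bounds of the form $\|\nabla\bu\|_\infty \io(\rho-\br)\,K_\alpha\ast(\rho-\br)\,\dx$ (resp.\ with $\bv,n,\bn$), which are directly comparable to the Riesz part of $\mathcal{E}(\rho,n\,|\,\br,\bn)$. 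The off-diagonal pieces such as $\io(\rho-\br)\bu\cdot\nabla K_\alpha\ast(n-\bn)\,\dx$ lack any such antisymmetric structure, and this is precisely the bipolar-specific difficulty. I plan to estimate them directly by H\"older's inequality combined with the HLS-type mapping of $\nabla K_\alpha$ delivered by Proposition~\ref{prop_nabla_K} applied with $\alpha-1$, reducing matters to bounds of the form $C\|\bu\|_\infty\|\rho-\br\|_{L^p}\|n-\bn\|_{L^q}$ for a dual pair of HLS exponents $(p,q)$. Controlling $\|\rho-\br\|_{L^p}^2$ and $\|n-\bn\|_{L^q}^2$ by $\mathcal{E}(\rho,n\,|\,\br,\bn)$ is done via $h_i(r|\bar r)\gtrsim|r-\bar r|^2$ when $\gamma\geq 2$, and via interpolation between $L^1$ (provided by \eqref{massconservation} and \eqref{bounded_away_zero}) and $L^\gamma$ (provided by $h_i$) when $1<\gamma<2$; matching the HLS exponents with this interpolation is what forces the combined restrictions \eqref{gamma_hyp_1}--\eqref{gamma_hyp_2}. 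Finally, the smallness of $\sigma$ is used to absorb the multiplicative constants into the coercive Riesz piece of $\mathcal{E}$, so that $|J|$ is bounded by a fraction of the Riesz part of $\Psi$ plus $C\Psi$. I expect this off-diagonal estimate to be the chief technical obstacle and the exact point where the analysis departs from the one-species treatment of \cite{alves2024weak}.

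\textbf{Conclusion.} Assembling the preceding bounds gives
\[
\Psi(t)+c\int_0^t\io \rho|u-\bu|^2+n|v-\bv|^2\,\dx\,\dt \leq \Psi(0)+C\eps^2+C\int_0^t\Psi(\tau)\,\dt,
\]
and Gronwall's lemma yields \eqref{main_stability}. The convergence statement under $\Psi(0)\to 0$ as $\eps\to 0$ is then immediate.
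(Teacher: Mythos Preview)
Your overall architecture---test the weak formulations against functions built from the strong solution, add in the dissipation inequality, absorb the convective, pressure, and error terms by standard means, then close by Gronwall---is exactly the paper's. The differences are in the details of the nonlocal term $J$ and a couple of inaccuracies worth flagging.

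\textbf{Treatment of $J$.} You propose to split $J$ into diagonal pieces $\io(\rho-\br)\bu\cdot\nabla K_\alpha\ast(\rho-\br)\,\dx$ (handled by antisymmetrization) and off-diagonal pieces (handled by H\"older plus the Riesz mapping). The paper does not split: it bounds $J$ in one stroke via $|\nabla K_\alpha|\leq c\,K_{\alpha-1}$, so $J\leq C\io (|\rho-\br|+|n-\bn|)\,I_{\alpha-1}(|\rho-\br-n+\bn|)\,\dx$, and then applies H\"older together with the $L^p\!\to\!L^{dp/(d-(\alpha-1)p)}$ mapping of $I_{\alpha-1}$. In Case~I ($\gamma\geq 2$) this yields $J\leq C(\|\rho-\br\|_2^2+\|n-\bn\|_2^2)\leq C\int h_1(\rho|\br)+h_2(n|\bn)$; in Case~II the exponents $q=2/(3-\gamma)$ and $p$ with $q'=dp/(d-(\alpha-1)p)$ are chosen explicitly, and Lemma~\ref{lemma_int_h}(ii) closes. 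Your diagonal/off-diagonal route would also work, but note that the antisymmetrization produces $\io |\rho-\br|\,K_\alpha\ast|\rho-\br|\,\dx$ (with absolute values, since $\rho-\br$ changes sign), which is \emph{not} the Riesz piece of $\mathcal{E}(\rho,n\,|\,\br,\bn)$---that piece involves $\rho-\br-n+\bn$. You would still need HLS plus Lemma~\ref{lemma_int_h} to bound it by $\int h_1(\rho|\br)\,\dx$, so nothing is gained over treating $J$ uniformly.

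\textbf{Two inaccuracies.} First, for power-law pressures $p_i(r)=r^{\gamma_i}$ one has $p_i(r|\bar r)=(\gamma_i-1)\,h_i(r|\bar r)$ identically, so the bound $p_i(r|\bar r)\lesssim h_i(r|\bar r)$ is trivial for all $\gamma_i>1$ and is \emph{not} a source of the case split \eqref{gamma_hyp_1}--\eqref{gamma_hyp_2}; that split arises solely from the estimate of $J$. Second, the smallness of $\sigma$ is used in the paper only to guarantee $\io h_1(\rho|\br)+h_2(n|\bn)\,\dx \leq \lambda^{-1}\Psi$ (and hence $\Psi\geq 0$), not to absorb constants in the $J$ estimate; once $\sigma$ is fixed small, $J\leq C\Psi$ with a constant that may depend on $\sigma$ but need not be small.
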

Let us make a few remarks regarding the statement of Theorem~\ref{thm_main}. The condition on~$\sigma$ refers to the fact that it must be chosen small enough so that the relative potential energy between the solutions under consideration is nonnegative. This implies that the relative total energy is nonnegative, thereby serving as a yardstick for comparing the solutions. Analytically, this smallness assumption is a technical requirement guaranteeing that the nonlocal Riesz interaction term can be controlled by the pressure contributions in the relative energy. From a physical standpoint, it corresponds to a weakly coupled regime, where the interaction forces remain subordinate to the pressure effects. 

 \par
Regarding the conditions on~$\gamma$ in~\eqref{gamma_hyp_2}, they are more restrictive than those imposed by the definition of dissipative weak solutions discussed in the previous section. This further constraint will become apparent in the proof of the main theorem in the next section. These technical assumptions on $\gamma$ ensure sufficient integrability of the densities to control the nonlocal interaction terms via the Hardy--Littlewood--Sobolev inequality; see Section~\ref{section_proof_thm}. Moreover, for $\alpha = 2$ we recover the bipolar Euler--Poisson case, with threshold $\gamma \geq 2 - 1/d$; see \cite[Theorem~3.3]{alves2022relaxation}. The condition on~$\gamma$ achieved here is thus consistent with the bipolar Euler--Poisson case. 
\par
Theorem~\ref{thm_main} implies that if the considered solutions coincide at $t = 0$ (uniformly in~$\eps$), then the dissipative weak solution to~\eqref{BER_HF} converges, in the relative energy sense, to the strong solution of~\eqref{BAD}. Behind this result thus lies a weak-strong uniqueness property for the bipolar Euler--Riesz system, with the approximate system~\eqref{BAD_approx} in that case being in fact the exact system~\eqref{BER_HF}.
\begin{remark}
Throughout this work, we consider pressure functions of power-law type, as in \eqref{pressures}, and take the interaction coefficient $\sigma$ to be positive. These assumptions are made for simplicity. However, as shown in \cite{alves2022relaxation}, our main result extends to more general pressure laws with only minor modifications to the analysis. In addition, the case $\sigma < 0$, corresponding to repulsive interactions, can also be treated within the same framework.
\end{remark}

\section{Convergence in the high-friction limit} \label{section_proof}
In this section we provide a proof of Theorem~\ref{thm_main}. We start with two crucial lemmas concerning the internal energy functions, yielding lower bounds for the associated relative entropy. \par 
The positive constants~$C$ that will appear below can change from line to line and may depend on the parameters of the problem but do not depend on the relaxation coefficient~$\varepsilon$.
\subsection{Results on the internal energy}

The first lemma is a particular case of \cite[Lemma~2.4]{lattanzio2013relative}.

\begin{lemma} \label{lemma_h}
Let $h(r) := \tfrac{1}{\gamma - 1} r^\gamma$ for some $\gamma > 1$, and let $\bar r \in [\bar \delta, \bar M]$ be fixed, where $\bar \delta > 0$ and $\bar M < \infty$. Then, there exist $R > \bar M + 1$ and $C > 0$ such that 
\begin{equation*}
h(r | \bar r) \geq \begin{cases}
C |r - \bar r|^2 \quad \text{if} \ r \in [0,R], \\
C |r - \bar r|^\gamma \quad \text{if} \ r \in (R,\infty).
\end{cases}
\end{equation*}
Moreover, if $\gamma \geq 2$, then
\[h(r | \bar r) \geq C|r - \bar r|^2 \]
for all $r > 0$.
\end{lemma}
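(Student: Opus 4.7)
My plan is to prove the two bounds separately, exploiting the strict convexity of $h$ on $(0,\infty)$ together with compactness on bounded regions and the asymptotic homogeneity of $h$ at infinity. The key observation is that $h$ is smooth on $(0,\infty)$ with $h''(r)=\gamma r^{\gamma-2}>0$, so Taylor's theorem gives
\[
h(r\,|\,\bar r) \;=\; \tfrac{1}{2}h''(\xi)(r-\bar r)^2 \;=\; \tfrac{\gamma}{2}\xi^{\gamma-2}(r-\bar r)^2
\]
for some $\xi$ between $r$ and $\bar r$; in particular $h(r\,|\,\bar r)>0$ whenever $r\neq\bar r$, and $h(0\,|\,\bar r)=\bar r^\gamma>0$.

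For the first (quadratic) bound on $[0,R]$, I would argue by compactness. Define
\[
\phi(r) \;:=\; \frac{h(r\,|\,\bar r)}{(r-\bar r)^2} \qquad\text{for } r\in[0,R]\setminus\{\bar r\},
\]
and extend by $\phi(\bar r):=\tfrac{1}{2}h''(\bar r)=\tfrac{\gamma}{2}\bar r^{\gamma-2}>0$. A Taylor expansion shows that this extension is continuous, and $\phi(r)>0$ everywhere on $[0,R]$ by strict convexity of $h$. Since $[0,R]$ is compact, $\phi$ attains a positive minimum $C>0$, which yields the claimed estimate. (If desired, uniformity in $\bar r\in[\bar\delta,\bar M]$ follows by the same compactness argument applied to the joint variable $(r,\bar r)$.)

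For the $\gamma$-th power bound on $(R,\infty)$, I would study the ratio
\[
\psi(r) \;:=\; \frac{h(r\,|\,\bar r)}{(r-\bar r)^\gamma} \;=\; \frac{1}{\gamma-1}\cdot\frac{r^\gamma-\bar r^\gamma-\gamma\bar r^{\gamma-1}(r-\bar r)}{(r-\bar r)^\gamma}.
\]
A direct computation (dividing numerator and denominator by $r^\gamma$) shows $\psi(r)\to\tfrac{1}{\gamma-1}$ as $r\to\infty$, while $\psi$ is continuous and strictly positive on $[\bar M+1,\infty)$. Choosing $R>\bar M+1$ large enough so that $\psi(r)\geq\tfrac{1}{2(\gamma-1)}$ for all $r\geq R$ gives the desired bound on $(R,\infty)$ with constant $C=\tfrac{1}{2(\gamma-1)}$.

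For the moreover statement, I combine the two estimates. On $[0,R]$ the quadratic bound already holds. On $(R,\infty)$ we have $|r-\bar r|>R-\bar M>1$, so for $\gamma\geq 2$ one has $|r-\bar r|^\gamma\geq |r-\bar r|^2$, and the $\gamma$-power bound upgrades to a quadratic one. The expected main obstacle in the argument is the extension and continuity analysis of $\phi$ near $\bar r$ when $\gamma\in(1,2)$: although $h''$ blows up at $r=0$, the Taylor remainder at $\bar r\in[\bar\delta,\bar M]$ is still controlled because $\bar\delta>0$, so the removable singularity at $r=\bar r$ is handled uniformly. All other steps are routine applications of continuity and compactness.
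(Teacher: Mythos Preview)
Your argument is correct. The paper does not actually supply its own proof of this lemma; it merely records that the statement is a particular case of \cite[Lemma~2.4]{lattanzio2013relative}. What you have written is therefore a self-contained proof where the paper only gives a citation. Your line of reasoning --- Taylor remainder plus compactness for the quadratic bound on $[0,R]$, asymptotic analysis of the ratio $h(r\,|\,\bar r)/(r-\bar r)^\gamma$ to fix $R$, and then the elementary observation $|r-\bar r|^\gamma\geq |r-\bar r|^2$ once $|r-\bar r|>1$ for the ``moreover'' part --- is the standard one and matches how such lemmas are proved in the cited literature. Two minor points worth tightening: first, the order of choices should be made explicit (pick $R$ from the asymptotic step, then run the compactness argument on $[0,R]$, then take the smaller of the two constants); second, since the lemma is applied pointwise in the paper with $\bar r=\bar r(x)$ varying over $[\bar\delta,\bar M]$, the uniformity in $\bar r$ that you mention parenthetically is in fact needed, and your suggested joint-compactness argument on $(r,\bar r)\in[0,R]\times[\bar\delta,\bar M]$ together with the uniform limit $\psi(r,\bar r)\to\tfrac{1}{\gamma-1}$ as $r\to\infty$ handles this without difficulty.
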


The second lemma collects two estimates whose proofs can be found in \cite{lattanzio2017gas, alves2022relaxation, alves2024weak}, being a consequence of Lemma~\ref{lemma_h}. 

\begin{lemma} \label{lemma_int_h}
Let $r$, $\bar r \in L^\gamma(\Omega)$ for some $\gamma > 1$ and assume that $0 < \bar \delta \leq \bar r(x) \leq \bar M < \infty$ for a.e. $x \in \Omega$.
Let $h(r) := \tfrac{1}{\gamma - 1} r^\gamma$. Then there exists $C>0$ such that:
\begin{enumerate}[(i)]
\item If $\gamma \geq 2 - \alpha/d$, $0 < \alpha < d$, then
\begin{equation}\label{lem_estimate_p}
\|r - \bar r \|_{\frac{2d}{d+\alpha}}^2 \leq C \int_\Omega h(r | \bar r) \, \dx.
\end{equation}
\item If $ 1 < \gamma < 2$, then
\begin{equation}\label{lem_estimate_q}
\|r - \bar r \|_{\frac{2}{3-\gamma}}^2 \leq C \int_\Omega h(r | \bar r) \, \dx.
\end{equation}

\end{enumerate}
\end{lemma}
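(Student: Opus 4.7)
The plan is to derive both estimates from the pointwise bounds of Lemma~\ref{lemma_h} via a dichotomy on the size of $r$, combined with elementary $L^p$ embedding. Set $A := \io h(r|\bar r)\,\dx$, fix $R > \bar M + 1$ as in Lemma~\ref{lemma_h}, and split $\Omega = \Omega_1 \cup \Omega_2$ with $\Omega_1 := \{r \leq R\}$ and $\Omega_2 := \{r > R\}$. Lemma~\ref{lemma_h} then yields
\[
\int_{\Omega_1} |r-\bar r|^2 \,\dx \leq CA, \qquad \int_{\Omega_2} |r-\bar r|^\gamma \,\dx \leq CA.
\]
A preliminary algebraic check confirms that in each case the target exponent $p$ satisfies $p \leq 2$ and $p \leq \gamma$: for (i), $p = \tfrac{2d}{d+\alpha}$ gives $p\leq 2$ from $\alpha>0$, while $p\leq \gamma$ reduces to $\alpha(d-\alpha)\geq 0$ after invoking $\gamma \geq 2-\alpha/d$; for (ii), $p = \tfrac{2}{3-\gamma}$ gives $p\leq 2$ from $\gamma<2$, and $p\leq \gamma$ is equivalent to $(\gamma-1)(\gamma-2)\leq 0$, valid on $(1,2]$.

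On $\Omega_1$, H\"older's inequality with conjugate exponents $2/p$ and $2/(2-p)$ (applicable since $p \leq 2$ and $|\Omega_1|<\infty$) gives
\[
\|r-\bar r\|_{L^p(\Omega_1)}^2 \leq |\Omega|^{2/p-1}\|r-\bar r\|_{L^2(\Omega_1)}^2 \leq CA.
\]
On $\Omega_2$, the pointwise lower bound $|r-\bar r|\geq R-\bar M>0$, which follows from $r>R>\bar M\geq\bar r$, combined with $p\leq \gamma$, yields the pointwise estimate $|r-\bar r|^p \leq (R-\bar M)^{p-\gamma}|r-\bar r|^\gamma$. Integrating gives $\|r-\bar r\|_{L^p(\Omega_2)}^p \leq CA$, and raising to the power $2/p$ produces $\|r-\bar r\|_{L^p(\Omega_2)}^2 \leq (CA)^{2/p}$. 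The two pieces are combined via Minkowski's inequality in $L^p$.

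The main obstacle is the conversion of this last $A^{2/p}$ factor back to~$A$. Since $p<2$ yields $2/p>1$, one writes $A^{2/p}=A\cdot A^{2/p-1}$ and invokes an a priori upper bound $A\leq A_\ast$ to obtain $A^{2/p}\leq A_\ast^{2/p-1}\,A$. This is where the constant $C$ implicitly acquires dependence on $\|r\|_{L^\gamma(\Omega)}$; indeed, a direct test with $r\equiv M$ constant shows the ratio $\|r-\bar r\|_p^2/A$ scales like $M^{2-\gamma}$, so the estimate genuinely requires such an a priori bound whenever $\gamma<2$. In the intended application this is supplied uniformly in~$\eps$ by the energy inequality~\eqref{energyconservation}. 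As an aside, when $\gamma\geq 2$ (a sub-case of (i)) the second half of Lemma~\ref{lemma_h} gives $|r-\bar r|^2\leq C h(r|\bar r)$ globally on~$\Omega$, so the split becomes unnecessary and (i) follows directly from H\"older with a constant genuinely independent of~$r$.
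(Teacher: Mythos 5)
Your proof is correct and employs the standard decomposition underlying the cited references: split $\Omega$ into $\{r\le R\}$ and $\{r>R\}$ according to Lemma~\ref{lemma_h}, treat the first piece by H\"older from $L^2$, treat the second using the pointwise lower bound $|r-\bar r|\ge R-\bar M$ together with $p\le\gamma$, and combine. The exponent checks $p\le 2$ and $p\le\gamma$ in both cases (i) and (ii) are verified correctly and are exactly what makes the argument close. You have also flagged a genuine subtlety that the lemma's statement leaves implicit: for $\gamma<2$ the constant cannot be independent of $r$, since the $\Omega_2$ contribution yields a factor $A^{2/p}$ with $2/p>1$, and the scaling test $r\equiv M$ gives $\|r-\bar r\|_p^2/A\sim M^{2-\gamma}$, which diverges. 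Your resolution---absorbing the excess power via an a priori bound $A\le A_*$, supplied uniformly in $\eps$ by the energy bound~\eqref{energyconservation} and the assumed boundedness of $\bar r$---is the correct reading, and matches the convention in the references, where such constants implicitly depend on the a priori energy and mass levels. The aside for $\gamma\ge 2$, where the global bound $|r-\bar r|^2\le Ch(r|\bar r)$ makes the split unnecessary and the constant genuinely universal, is likewise correct.
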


\subsection{Proof of Theorem~\ref{thm_main}} \label{section_proof_thm}

Let $(\rho, u, n ,v)$ be a dissipative weak solution to~\eqref{BER_HF} with $\sigma > 0$ and $\gamma$ satisfying~\eqref{gamma_hyp_1} or \eqref{gamma_hyp_2}. Let $(\br,\bn)$ be a strong solution of~\eqref{BAD} such that~\eqref{bounded_away_zero} holds. \par 
First, we choose $\sigma>0$ small enough so that the relative energy $\Psi$ is nonnegative. Using the Hardy--Littlewood--Sobolev inequality \cite[Theorem~4.3]{lieb2001analysis} we have 
\begin{align*} 
\left| \io (\rho - \br - n + \bn) K_\alpha \ast (\rho - \br - n + \bn) \, \dx \right| & \leq C \, \|\rho - \br - n + \bn \|_{\frac{2d}{d+\alpha}}^2 \\
& \leq C \left( \|\rho - \br \|_{\frac{2d}{d+\alpha}}^2 + \|n - \bn \|_{\frac{2d}{d+\alpha}}^2 \right)
\end{align*}
which, by estimate \eqref{lem_estimate_p} in Lemma~\ref{lemma_int_h} yields
\begin{equation} 
\left| \io (\rho - \br - n + \bn) K_\alpha \ast (\rho - \br - n + \bn) \, \dx \right| \leq C_\ast \io h_1(\rho|\br) + h_2(n|\bn) \, \dx 
\end{equation}
for some $C_\ast > 0$. Choosing $\sigma < 2/C_\ast$ and setting $\lambda = 1-\sigma C_\ast/2 > 0$, it follows that
\begin{align*}
0  & \leq  \io h_1(\rho|\br) + h_2(n|\bn) \, \dx \\
 & \leq \frac{1}{\lambda} \io h_1(\rho|\br) + h_2(n|\bn) + \sigma \tfrac{1}{2}(\rho - \br - n + \bn) K_\alpha \ast (\rho - \br - n + \bn)  \, \dx
\end{align*}
which implies that $\Psi \geq 0$. \par 
The next step is to derive the inequality satisfied by $\Psi$, which in essence is a weak form of identity~\eqref{relative_energy_identity0}.
\begin{lemma}
For $t \in (0,T)$, the relative energy $\Psi$ satisfies
\begin{equation} \label{REL_I}
\Psi(t) - \Psi(0) + \int_0^t \io \rho|u - \bu|^2 + n|v- \bv|^2 \, \dx \, \rd \tau \leq I_1 + I_2 + I_3 + I_4
\end{equation}
where 
\begin{equation*}
\begin{split}
I_1 & = - \int_0^t \io \eps \nabla \bu : \rho (u - \bu) \otimes (u - \bar u) + \eps \nabla \bv : n (v - \bv) \otimes (v - \bv)\, \dx \, \rd \tau, \\
I_2 & = - \int_0^t \io (\nabla \cdot \bu) p_1(\rho | \br) + (\nabla \cdot \bv) p_2(n | \bn) \, \dx \, \rd \tau, \\
I_3 & = \int_0^t \io \sigma \big((\rho - \br) \bu -(n - \bn)\bv  \big) \cdot \nabla K_\alpha *(\rho - \br - n + \bn) \, \dx \, \rd \tau, \\
I_4 & = - \int_0^t \io \eps \frac{\rho}{\br} \bar e_1 \cdot (u - \bar u) + \eps \frac{n}{\bn} \bar e_2 \cdot (v - \bv) \, \dx \, \rd \tau.
\end{split}
\end{equation*}
\end{lemma}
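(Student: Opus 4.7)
My plan is to mimic the classical derivation of the relative energy identity \eqref{relative_energy_identity0}, but at the level of dissipative weak solutions. The working decomposition is
\[
\Psi(t) = \int_\Omega \mathcal{H}(\rho,u,n,v)\,\dx - \int_\Omega \mathcal{H}(\br,\bu,\bn,\bv)\,\dx - \mathcal{L}(t),
\]
where $\mathcal{L}(t)$ bundles the terms linear in $(\rho-\br,\rho u-\br\bu,n-\bn,nv-\bn\bv)$ that arise from Taylor-expanding both the kinetic and the internal-plus-nonlocal energies around $(\br,\bu,\bn,\bv)$. Each of the three pieces will be evolved in time separately and then reassembled.

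First, I would specialize the weak dissipation \eqref{weakdissip} to a nonnegative Lipschitz $\theta$ approximating $\mathbf{1}_{[0,t]}$ and pass to the limit; the time continuity assumed in Definition~\ref{def_weak_sol} lets me identify traces at $0$ and $t$, and yields an \emph{inequality} for the first piece. Second, since $(\br,\bu,\bn,\bv)$ is a classical solution of \eqref{BAD_approx}, dotting the momentum equations by $\bu,\bv$, integrating over $\Omega$, and using the symmetry of $K_\alpha$ produces an \emph{exact} identity for the second piece, with residual $\eps\int_0^t\io(\bar e_1\cdot\bu+\bar e_2\cdot\bv)\,\dx\,\rd\tau$ coming from the approximation errors.

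Third and most involved, I would evolve $\mathcal{L}(t)$ by feeding carefully chosen smooth test functions into \eqref{weak1}--\eqref{weak4}. Specifically: $\tilde\varphi=\bu$ and $\tilde\psi=\bv$ in the momentum formulations (admissible by the no-flux condition on $\bu,\bv$); $\varphi=h_1'(\br)$ and $\psi=h_2'(\bn)$ in the continuity formulations, which, combined with $h_i$ Taylor expansions, produce the relative pressures $p_1(\rho|\br)$ and $p_2(n|\bn)$ in $I_2$; and $\varphi=\sigma K_\alpha\ast(\br-\bn)$, $\psi=-\sigma K_\alpha\ast(\br-\bn)$ in the continuity formulations to process the nonlocal linearization (admissible thanks to Proposition~\ref{prop_nabla_K} under the assumed exponent ranges). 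Whenever $\partial_t\bu$ or $\partial_t\bv$ falls on a test function, I substitute the corresponding momentum equation from \eqref{BAD_approx} divided by $\br$ or $\bn$; this is precisely what produces the ratios $\rho/\br$ and $n/\bn$ in $I_4$.

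The main obstacle will be the bookkeeping of the nonlocal convolution terms. Several Riesz expressions arise across the three steps---self-interactions of $\br-\bn$, mixed terms like $\rho K_\alpha\ast(\br-\bn)$ and $\br K_\alpha\ast(\rho-n)$ from the momentum substitutions, and the $\rho K_\alpha\ast(\rho-n)\cdot\bu$ term directly from \eqref{weak2}---and the spatial symmetry of $K_\alpha$ must be applied repeatedly to collapse all of them into the single asymmetric expression $I_3$. Once that reorganization goes through, the convective contributions assemble into $I_1$, the friction-type contributions combine to give the dissipation $\int(\rho|u-\bu|^2+n|v-\bv|^2)$ on the left-hand side, and what remains is exactly $I_1+I_2+I_3+I_4$; the inequality (rather than equality) is inherited solely from the use of \eqref{weakdissip} in the first step.
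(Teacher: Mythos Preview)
Your proposal is correct and takes the same approach as the paper, which itself gives only a brief sketch (deferring the bulk of the computation to \cite{alves2022relaxation} and \cite{alves2024weak}) and makes explicit only the collapse of the nonlocal terms into $I_3$ via the symmetry of $K_\alpha$ and the continuity equation for $\br-\bn$---exactly the step you flag as the main obstacle. One minor bookkeeping point: your list of continuity test functions should also include $\varphi=\tfrac12|\bu|^2$ and $\psi=\tfrac12|\bv|^2$ to evolve the kinetic part of $\mathcal L(t)$, though this is already implicit in your remark about substituting $\partial_t\bu$ and $\partial_t\bv$ from \eqref{BAD_approx}.
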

\begin{proof}
We omit most of the details as this derivation closely follows the proof of \cite[Proposition~4.4]{alves2022relaxation} for the bipolar Euler--Poisson case, except for how one has to handle the interaction terms. In the bipolar Euler--Poisson case, we have at our disposal integration by parts formulas that do not hold here. Alternatively, one may also carry our the estimates for each species separately, following \cite{alves2024weak}, and combine the results.

We have:
\begin{align*} 
\Psi(t) - \Psi(0) \leq \ &  - \int_0^t \io \rho|u - \bu|^2 + n|v- \bv|^2 \, \dx \, \rd \tau + I_1 + I_2 + I_4 \\ 
 & - \int_0^t \io \sigma (\rho - \br - n + \bn) \partial_\tau K_\alpha \ast (\br - \bn) \, \dx \, \rd \tau \\ 
 & + \int_0^t \io \sigma \big(\nabla K_\alpha \ast (\rho - \br - n + \bn)\big) \cdot (\rho \bu - n \bv) \, \dx \, \rd \tau.
\end{align*}
Set $\xi = \rho - \br - n + \bn$. Using the symmetry of $K_\alpha$ and the no-flux boundary conditions we derive
\begin{align*}
- \io \xi K_\alpha \ast \partial_\tau (\br - \bn) \, \dx & = - \io \partial_\tau (\br - \bn) K_\alpha \ast \xi \, \dx \\
& = \io \nabla \cdot (\br \bu - \bn \bv) K_\alpha \ast \xi \, \dx \\
& = - \io (\br \bu - \bn \bv) \nabla K_\alpha \ast \xi \, \dx
\end{align*}
from which \eqref{REL_I} follows.
\end{proof}

The final step in the proof of Theorem~\ref{thm_main} consists in estimating the right-hand side of the relative energy inequality~\eqref{REL_I}
in such a way that Gronwall's inequality can be applied to conclude estimate~\eqref{main_stability}.

We estimate the term $I_1$ by
\begin{equation} \label{control_I1}
\begin{split}
I_1 & \leq C \int_0^t \io \eps \rho |u - \bu|^2 + \eps n |v - \bv|^2  \, \dx \, \rd \tau \\
& \leq C \int_0^t \Psi(\tau) \, \rd \tau.
\end{split}
\end{equation}
For $I_2$ we have
\begin{equation} \label{control_I2}
\begin{split}
I_2 & \leq C \int_0^t \io  p_1(\rho | \br) + p_2(n | \bn)   \, \dx \, \rd \tau \\
& \leq C \int_0^t \io  h_1(\rho | \br) + h_2(n | \bn)   \, \dx \, \rd \tau \\
& \leq C \int_0^t \Psi(\tau) \, \rd \tau.
\end{split}
\end{equation}
By the boundedness of the error terms $\bar{e}_1, \bar{e}_2$, the conservation of mass and hypothesis~\eqref{bounded_away_zero},
\begin{equation} \label{control_I4}
\begin{split}
I_4  & \leq \frac{1}{2 } \int_0^t \io \rho|u-\bar{u}|^2+n|v-\bar{v}|^2 \, \dx \, \rd \tau+\frac{\varepsilon^2}{2}\int_0^t \io \rho \bigg|\frac{\bar{e}_1}{\bar{\rho}} \bigg|^2+n \bigg|\frac{\bar{e}_2}{\bar{n}}\bigg|^2\, \dx \, \rd \tau\\
   & \leq \frac{1}{2 } \int_0^t \int_\Omega \rho|u-\bar{u}|^2+n|v-\bar{v}|^2 \, \dx \, \rd \tau + C \varepsilon^2 t.\\
\end{split}
\end{equation}
The term  $I_3$ is the most intricate, being also the one whose bound requires the restrictions on $\gamma$ given in \eqref{gamma_hyp_1} and \eqref{gamma_hyp_2}.
Let $J$ be given by 
\begin{equation} \label{J}
J = \io \big((\rho - \br) \bu -(n - \bn)\bv  \big) \cdot \nabla K_\alpha *(\rho - \br - n + \bn) \, \dx. 
\end{equation}
Then
\begin{equation} \label{J_bound_0}
J \leq C \io (|\rho - \br| + |n - \bn|) |I_{\alpha - 1}(\rho - \br - n + \bn)| \, \dx 
\end{equation}
where for $0 < \beta < d$, $I_\beta(f)$ denotes the classic linear fractional integral (or Riesz potential) of a measurable function $f$
\[I_\beta(f)(x) = \io f(y) |x - y|^{\beta - d} \, \dy. \]
This operator maps $L^p$ to $L^{\frac{dp}{d-\beta p}}$ for $1 < p < d/\beta$, corresponding, via duality, to the Hardy--Littlewood--Sobolev inequality; see \cite[Theorem~5.1.3]{grafakos2024fundamentals} and \cite[Theorem~4.3]{lieb2001analysis}.
\par 
We now estimate the right-hand side of \eqref{J_bound_0}, considering two separate cases according to assumptions \eqref{gamma_hyp_1} and \eqref{gamma_hyp_2}. \\ \\
\textbf{Case I}. $\gamma \geq 2$ and $1 < \alpha < d/2 + 1$:
~ \par
We first use the inequality $2ab \leq a^2 + b^2$ to estimate 
\[J \leq C \io |\rho - \br|^2 + |n - \bn|^2 \, \dx + \io |I_{\alpha - 1}(\rho - \br - n + \bn)|^2 \, \dx. \]
Now, we let $r = 2d/(d+2(\alpha - 1))$ so that
\[ \frac{dr}{d-(\alpha - 1)r} = 2. \]
From the assumptions on $\alpha$ one can readily see that 
\[ 1 < r < \min\left\{2, \frac{d}{\alpha - 1} \right\} \]
and hence, using the mapping properties of the linear fractional integral together with the embedding $L^2 \subseteq L^r$, we find
\begin{align*}
\io |I_{\alpha - 1}(\rho - \br - n + \bn)|^2 \, \dx & \leq C \left( \io |\rho - \br - n + \bn|^r \, \dx \right)^{\frac{2}{r}} \\
& \leq C \io |\rho - \br - n + \bn|^2 \, \dx \\ 
& \leq C \io |\rho - \br|^2 + |n - \bn|^2 \, \dx.
\end{align*}
Since $\gamma \geq 2$ and the strong solution is bounded away from zero we have by Lemma~\ref{lemma_h} that
\[ |\rho - \br|^2 \leq C h_1(\rho | \br) \quad \text{and} \quad |n - \bn|^2 \leq C h_2(n | \bn).\]
Putting together the above estimates yields
\begin{align*}
J &\leq C  \io |\rho - \br|^2 + |n - \bn|^2 \, \dx \\
& \leq C \io h_1(\rho | \br) + h_2(n | \bn) \, \dx \\
& \leq C \Psi,
\end{align*}
which gives
\[I_3 \leq \int_0^t \Psi(\tau) \, \rd \tau.\]
\\
\textbf{Case II}. $2 - (\alpha - 1)/d \leq \gamma < 2$ and $1 < \alpha \leq d/2 + 1$: 
~ \par 
Let \[q = \frac{2}{3-\gamma} \quad \text{and} \quad  p = \frac{2d}{d(\gamma - 1)+2(\alpha - 1)} \]
so that 
\[q^\prime = \frac{q}{q - 1} = \frac{dp}{d - (\alpha - 1)p}. \]
The assumptions on $\gamma$ and $\alpha$ imply that
\[1 < p \leq q < \gamma < 2 \quad \text{and} \quad p < \frac{d}{\alpha - 1}. \]
We then proceed using \eqref{J_bound_0}. By H\"{o}lder's inequality we have
\[J  \leq C \| |\rho - \br| + |n - \bn| \|_q \|I_{\alpha - 1}(\rho - \br - n + \bn) \|_{q^\prime},  \]
which is bounded by 
\[C (\|\rho - \br \|_q + \|n - \bn \|_q )\| \rho - \br - n + \bn\|_p \]
due to the mapping properties of the linear fractional integral. Since $p \leq q$ and $\Omega$ has finite measure, the previous quantity is controlled by
\[C (\|\rho - \br \|_q + \|n - \bn \|_q )\| \rho - \br - n + \bn\|_q, \]
which, in turn, is bounded by 
\[C (\|\rho - \br \|_q^2 + \|n - \bn \|_q^2). \]
We now use estimate~\eqref{lem_estimate_q} in Lemma~\ref{lemma_int_h} to conclude that 
\[\|\rho - \br \|_q^2 \leq C \io h_1(\rho | \br) \, \dx \quad \text{and} \quad \|n - \bn \|_q^2 \leq  C \io h_2(n | \bn) \, \dx.\]
It follows that $J \leq C \Psi$ and hence 
\[I_3 \leq \int_0^t \Psi(\tau) \, \rd \tau.\] 
\par 
Finally, combining the bounds derived so far for the terms $I_1, \ldots, I_4$ with the relative energy inequality~\eqref{REL_I} gives
\[\Psi(t) - \Psi(0) \leq C \int_0^t \Psi(\tau) \, \rd \tau + C \eps^2 t, \]
from which \eqref{main_stability} follows by Gronwall's lemma.

\section*{Acknowledgments}
This research was partially funded by the Austrian Science Fund (FWF), project number 10.55776/F65.

\end{document}